\newtheorem{theorem}{Theorem}[section]
\newtheorem{proposition}[theorem]{Proposition}
\newtheorem{corollary}[theorem]{Corollary}
\theoremstyle{definition}
\newtheorem{remark}[theorem]{Remark}
\numberwithin{equation}{section}
\begin{document}
\setcounter{page}{1}
\title{\vspace{-1.5cm}
\vspace{.5cm}
\vspace{.7cm}
{\large{\bf Jordan left $\alpha$-centralizer on certain algebras} }}
\date{}
\author{{\small \vspace{-2mm} M. Eisaei$^1$, M. J. Mehdipour $^2$\footnote{Corresponding author}, Gh. R. Moghimi}$^1$}
\affil{\small{\vspace{-4mm} $^1$ Department of Mathematics, Payam Noor University, Shiraz, Iran. }}
\affil{\small{\vspace{-4mm} $^2$ Department of Mathematics, Shiraz University of Technology, Shiraz, Iran. }}
\affil{\small{\vspace{-4mm} mojdehessaei59@student.pnu.ac.ir }}
\affil{\small{\vspace{-4mm} mehdipour@sutech.ac.ir}}
\affil{\small{\vspace{-4mm} moghimimath@pnu.ac.ir}}
\maketitle
\hrule
\begin{abstract}
\noindent
In this paper, we investigate Jordan left $\alpha$-centralizer on algebras. We show that every Jordan left $\alpha$-centralizer on an algebra with a right identity is a left $\alpha$-centralizer. We also investigate this result for Banach algebras with a bounded approximate identity. Finally, we study Jordan left $\alpha$-centralizer on group algebra $L^1(G)$.
\end{abstract}

\noindent \textbf{Keywords}: Jordan left $\alpha$-centralizer, algebra, compact left centralizer, group algebra.\\
{\textbf{2020 MSC}}: 47B48, 47B47, 43A15.
\\
\hrule
\vspace{0.5 cm}
\baselineskip=.8cm
\section{Introduction}
Let $G$ be a locally compact group with the group algebra $L^1(G)$ and the measurable algebra $M(G)$. 
It is well-known that $M(G)$ is the dual of $C_0(G)$, the space of all complex-valued continuous functions on $G$ vanish at infinity. It is also known that $M(G)$ is a unital Banach algebra with the convolution product. Let us remark that if $\mu, \nu \in M(G)$, then the convolution product $\mu$ and $\nu$ is the following:
$$\mu * \nu (f) = \int_G \int_G f(xy) d\mu(x) d \nu(y)$$
for all $f \in C_0(G)$. Note that $L^1(G)$ is a closed ideal of $M(G)$ and so it is a Banach algebra; for study of these Banach algebras see [6].

Let $A$ be an algebra and $\alpha$ be a homomorphism on $A$. Let us recall that 
an additive function $T: A\rightarrow A$ 
is called a \textit{ left $\alpha$-centralizer} if 
$$T(xy)= T(x) \alpha(y)$$
for all $x, y \in A$. Also, $T$ is called \textit{Jordan left $\alpha$-centralizer} if 
$$T(x^2)= T(x) \alpha(x)$$
for all $x\in A$. In the case where, $\alpha$ is the identity map, $T$ is called \emph{left centralizer} and \emph{Jordan left centralizer}, respectively. 

Zalar [16] studied Jordan left centralizers on semiprime rings and proved that every Jordan left centralizer on a semiprime ring $R$ with $\hbox{char}(R)\neq 2$ is a left centralizer. Wendel [15] investigated left centralizers on group algebras and determined left centralizers on $L^1(G)$. Compact left centralizers on $L^1(G)$ have been studied by Sakai [14] and Akemann [1]. They showed that the existence of a non-zero compact left centralizer on $L^1(G)$ is equivalent to the compactness of $G$; see [4, 5, 8, 11, 12]  for study of compact left centralizers on other group algebras; see also [3].

In this paper, we investigate Jordan left $\alpha$-centralizer on algebras and some group algebras. In Section 2, we show that if $A$ is an algebra with a right identity, then every Jordan left $\alpha$-centralizer on $A$ is a left $\alpha$-centralizer. We also prove that if $\alpha$ is continuous, then this result holds for bounded Jordan left $\alpha$-centralizers on Banach algebras with a bounded approximate identity. In Section 3, we let $\alpha$ be a continuous homomorphism on $L^1(G)$ and characterize bounded Jordan left $\alpha$-centralizer on $L^1(G)$. We also investigate the existence of a non-zero compact Jordan left $\alpha$-centralizer on $L^1(G)$.
\section{Jordan left $\alpha$-centralizer on algebras} 

Throughout this section, $\alpha$ is a homomorphism on an algebra $A$. The main result of this paper is the following.
\begin{theorem} \label{m1} Let $A$ be an algebra with a right identity $u$. Then every Jordan left $\alpha$-centralizer on $A$ is a 
left $\alpha$-centralizer. 
\end{theorem}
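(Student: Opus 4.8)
The plan is to linearize the Jordan identity, extract a Jordan triple product identity, and then use the right identity $u$ to collapse the resulting symmetric relations into the desired one-sided identity. Throughout write $T$ for the given Jordan left $\alpha$-centralizer; since $T$ is additive by hypothesis, it suffices to establish $T(xy)=T(x)\alpha(y)$ for all $x,y\in A$.

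First I would replace $x$ by $x+y$ in $T(x^2)=T(x)\alpha(x)$ and use additivity of $T$ together with the fact that $\alpha$ is a homomorphism to obtain the linearized relation
\[ T(xy+yx)=T(x)\alpha(y)+T(y)\alpha(x)\qquad(x,y\in A). \]
Feeding $a=x$ and $b=xy+yx$ into this relation and comparing with the expansion $T(x^2y+2xyx+yx^2)=T(x^2y+yx^2)+2T(xyx)$, where the first summand is again evaluated by the linearized relation (using $T(x^2)=T(x)\alpha(x)$), makes every term cancel except for $2T(xyx)=2T(x)\alpha(y)\alpha(x)$. Cancelling the factor $2$ yields the triple product identity $T(xyx)=T(x)\alpha(y)\alpha(x)$, and replacing $x$ by $x+z$ in it gives the fully linearized form
\[ T(xyz+zyx)=T(x)\alpha(y)\alpha(z)+T(z)\alpha(y)\alpha(x). \]

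The right identity now enters through two observations. Since $au=a$ for all $a$, we have $\alpha(a)\alpha(u)=\alpha(au)=\alpha(a)$, so $\alpha(u)$ is a right identity for $\alpha(A)$. Setting $x=u$ in the triple product identity and using $uyu=(uy)u=uy$ together with $\alpha(y)\alpha(u)=\alpha(y)$ gives $T(uy)=T(u)\alpha(y)$; that is, $T$ already behaves like a left $\alpha$-centralizer on products beginning with $u$. Finally I would put $z=u$ in the linearized triple identity: the left-hand side becomes $T(xy)+T(uyx)$ because $xyu=xy$, the term $T(uyx)$ equals $T(u)\alpha(y)\alpha(x)$ by the previous step applied to $yx$, and the right-hand side simplifies to $T(x)\alpha(y)+T(u)\alpha(y)\alpha(x)$ using $\alpha(y)\alpha(u)=\alpha(y)$. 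Cancelling the common summand $T(u)\alpha(y)\alpha(x)$ leaves exactly $T(xy)=T(x)\alpha(y)$, which is the assertion.

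I expect the main obstacle to be the derivation of the triple product identity, both in organizing the bookkeeping so that all the degree-three terms cancel and in the cancellation of the factor $2$; the latter is legitimate here because the algebras under consideration are over a field in which $2$ is invertible (in particular char$\,\neq 2$), and this is the only place where that hypothesis is used. A secondary point requiring care is to apply the right-identity simplifications only on the correct side, since $u$ need not be a two-sided identity and $\alpha(u)$ is only guaranteed to act as a right identity on the image of $\alpha$.
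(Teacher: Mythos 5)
Your proof is correct, and its second half takes a genuinely different route from the paper's. The opening coincides: both you and the authors linearize the Jordan identity to get $T(xy+yx)=T(x)\alpha(y)+T(y)\alpha(x)$, substitute $y\mapsto xy+yx$ to extract the Jordan triple identity $T(xyx)=T(x)\alpha(y)\alpha(x)$ (their equation (2.6)), cancel the factor $2$ (the paper needs this too, and acknowledges the char $\neq 2$ hypothesis only in Remark 2.2(i)), and derive $T(uy)=T(u)\alpha(y)$ by placing $u$ in the outer slots (their (2.7)). From there the paper follows a longer path: it accumulates auxiliary relations $T(u)=T(u)\alpha(u)$, $T(x)=T(x)\alpha(u)$, $T(ux)=T(ux)\alpha(u)$, then decomposes $x=ux+r$ with $r=x-ux$, proves $T(ry)=T(r)\alpha(y)$ by setting $x=u+r$ in the triple identity (a step whose justification silently uses $a\,(yr)=0$ for all $a$ and the auxiliary relations to kill the terms involving $\alpha(yr)$), and finally telescopes $T(xy)=T((x-ux)y)+T(uxy)$. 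You instead fully linearize the triple identity in its outer variable to get $T(xyz+zyx)=T(x)\alpha(y)\alpha(z)+T(z)\alpha(y)\alpha(x)$ and set $z=u$, after which the common summand $T(u)\alpha(y)\alpha(x)$ cancels and the conclusion drops out immediately. Your version avoids the entire battery of auxiliary identities and the $r=x-ux$ decomposition, is shorter, and makes transparent that $2$-torsion-freeness enters exactly once (in extracting the triple identity); it also transfers verbatim to rings of characteristic $\neq 2$, which is the generalization the paper only sketches. The paper's longer route yields no intermediate facts that are reused elsewhere, so in this instance the streamlining costs nothing. Your closing cautions (applying $u$ only as a right identity, and $\alpha(u)$ only as a right identity on $\alpha(A)$) are exactly the right things to watch, and your argument respects them throughout.
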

\begin{proof}
Let $T:A \rightarrow A$ be a Jordan left $\alpha$-centralizer. 
Then 
$T(x^2)=T(x) \alpha (x)$ 
for all $x \in A$. 
By the linearity of this equation we obtain 
\begin{equation} \label{e1} 
T(xy+yx)= T(x) \alpha(x) + T(y) \alpha(x)
\end{equation}
for all $x,y \in A$. Set $y=u$ in (2.1). Then 
\begin{equation} \label{e2} 
T(x+ u x)= T(x) \alpha(u) + T(u) \alpha(x).
\end{equation}
Putting $x=u$ in (2.2), we have 
\begin{equation} \label{e3} 
T(u)= T(u) \alpha (u). 
\end{equation}
If we replace $x$ by $ux$ in (2.2), then 
\begin{equation} \label{e4} 
2 T(ux)= T(ux) \alpha (u)+ T(u) \alpha(x). 
\end{equation}
This together with (2.2) implies that
\begin{eqnarray} \label{e5} 
2T(x)=2 T(x) \alpha(u) + T(u) \alpha(x) - T(ux) \alpha(u). 
\end{eqnarray}
Take $y=xy+yx$ in (2.1). Then 
\begin{equation} \label{e6} 
T(xyx)= T(x) \alpha(y) \alpha(x). 
\end{equation}
It follows that 
\begin{eqnarray} \label{e7}
T(ux)= T(u xu ) &=&T(u) \alpha(x) \alpha(x) \nonumber \\
&=& T(u) \alpha (xu) \\
&=& T(u) \alpha(x). \nonumber
\end{eqnarray}
Replacing $x$ by $ux$ in (2.5) and using (2.3) and (2.7), we have 
\begin{eqnarray*}
2T(ux) &=& 2 T(ux) \alpha(u) + T(u) \alpha(x) - T(ux) \alpha(u) \\
&=& T(ux) \alpha(u) + T(u) \alpha(x) \\
&=& T(ux) \alpha(u) + T(ux). 
\end{eqnarray*}
This shows that
\begin{equation} \label{e8} 
T(ux)= T(ux) \alpha(u).
\end{equation}
From (2.7) and (2.8), we infer that 
\begin{equation*} 
T(u) \alpha(x)= T(ux) \alpha(u).
\end{equation*}
On the other hand, 
by (2.2) and (2.7) 
\begin{equation*} 
T(x) + T(u) \alpha(x) = T(x) \alpha(u)+ T(u) \alpha(x). 
\end{equation*}
So 
\begin{equation*} 
T(x)= T(x) \alpha(x).
\end{equation*}
Let $r:= x-ux$. Then 
\begin{equation*} 
T(r)= T(r) \alpha(u).
\end{equation*}
Setting $x= u+r$ in (2.6), we have
\begin{eqnarray} \label{e9}
T(uy+ry) &=& T(u+r) y (u+r) \nonumber \\
&=& T(u+r) \alpha(y) \\
&=& T(u) \alpha(y) + T(r) \alpha(y)\nonumber
\end{eqnarray}
for all $y \in A$. By (2.7) 
\begin{equation*} 
T(uy+ry) =T(uy) + T(ry)= T(u) \alpha(y)+ T(ry). 
\end{equation*} 
From this and (2.9) we see that 
\begin{equation*} 
T(ry)=T(r) \alpha(y). 
\end{equation*} 
Hence for every $x,y \in A$, we have
\begin{eqnarray*} 
T(xy) &=& T(xy) - T(u) \alpha (xy) + T(u) \alpha (xy) \\
&=& T(xy) - T(uxy) + T(u) \alpha(xy) \\
&=& T((x-ux)y) + T(u) \alpha(xy) \\
&=& T(x-ux) \alpha(y) + T(u) \alpha(xy) \\
&=& T(x) \alpha(y) - T(ux) \alpha(y)+ T(u) \alpha(xy) \\
&=& T(x) \alpha (y). 
\end{eqnarray*} 
Therefore, $T$ is a left $\alpha$-centralizer on $A$. 
\end{proof}
\begin{remark} \label{m10}
(i) Similar to the proof of Theorem 2.1, one can prove that Theorem 2.1 holds for a ring $R$ with $\text{char}(R) \not =2$. \\
(ii) If $A$ is a unital algebra with the identity $e$, and $T: A \rightarrow A$ is a Jordan left $\alpha$-centralizer, then 
\begin{equation*} 
T(x)= T(ex) = T(e) \alpha(x)
\end{equation*}
for all $x \in A$. \\
(iii) There are some well-known group algebras which have a right identity or an identity. For example, the Banach algebras $L_0^\infty(G)^*$ defined as in [7] and $L^1(G)^{**}$ have a right identity; for more study on $L_0^\infty(G)^*$ see [11-13]. Also, the Banach algebras $(M(G)_0^*)^*$ defined as in [9, 10], $M(G)$ and $\hbox{LUC}(G)^*$ have an identity, where $\hbox{LUC}(G)$ is the space of all left uniformly continuous functions on $G$; for more details see [2]. Hence every Jordan left $\alpha$-centralizer on these Banach algebras is a left $\alpha$-centralizer.
\end{remark}
\begin{corollary}
Let $A$ be a unital normed algebra and $\alpha$ be a continuous homomorphism on $A$. Then every Jordan left $\alpha$-centralizer on $A$ is continuous. 
\end{corollary}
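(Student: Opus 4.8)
The plan is to reduce the statement to the explicit description of $T$ furnished by Remark 2.2(ii). Since $A$ is unital with identity $e$, that remark tells us that any Jordan left $\alpha$-centralizer $T$ on $A$ satisfies
$$T(x)=T(e)\,\alpha(x) \qquad (x\in A).$$
Thus $T$ is completely determined by the single element $a:=T(e)\in A$ together with the homomorphism $\alpha$, and it factors as $T = L_a\circ\alpha$, where $L_a\colon A\to A$ denotes left multiplication $y\mapsto a y$.

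Next I would check that each of the two factors is continuous. The map $\alpha$ is continuous by hypothesis. For $L_a$, I would use that the norm on a normed algebra is submultiplicative, so that
$$\|L_a(y)\| = \|a y\|\le \|a\|\,\|y\| \qquad (y\in A),$$
which shows $L_a$ is a bounded linear operator with $\|L_a\|\le \|a\|$. Being a composition of two continuous maps, $T=L_a\circ\alpha$ is then continuous; in fact one even obtains the quantitative bound $\|T(x)\|\le \|T(e)\|\,\|\alpha\|\,\|x\|$ for all $x\in A$, so that $T$ is bounded with $\|T\|\le \|T(e)\|\,\|\alpha\|$.

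I do not anticipate any real obstacle here: the entire difficulty has already been absorbed into Theorem 2.1 and Remark 2.2(ii), which convert the a priori only additive map $T$ into the concrete form $x\mapsto T(e)\,\alpha(x)$. The only point requiring a moment's care is the continuity of multiplication, and this is immediate from submultiplicativity of the algebra norm; in particular no completeness of $A$ is needed.
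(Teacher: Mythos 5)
Your proposal is correct and follows essentially the same route as the paper: both invoke Remark 2.2(ii) to write $T(x)=T(e)\,\alpha(x)$ and then deduce continuity from the bound $\|T(x)\|\le\|T(e)\|\,\|\alpha\|\,\|x\|$, which rests on submultiplicativity of the norm. Your explicit factorization $T=L_{T(e)}\circ\alpha$ is merely a repackaging of the same estimate, so there is no substantive difference.
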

\begin{proof}
Let $A$ be a unital normed algebra with the identity $e$. Let $T$ be a Jordan left $\alpha$-centralizer on $A$. Then by Remark 2.2 (ii), 
$$T(x)= T(e) \alpha(x)$$
for all $x \in A$. So if $\alpha$ is continuous, then 
\begin{eqnarray*}
\| T(x) \| = \| T(e) \alpha(x) \| \leq \|T(e)\| \| \alpha \| \| x\|. 
\end{eqnarray*}
Therefore, $T$ is continuous. 
\end{proof}
Let $A $ be a Banach algebra.
In the next result, we equip the second conjugate $A $ with the first Arens product. 
Let us recall that if $m,n \in A ^{**}$, then the first Arens product $m$ and $n$ is defined by $\langle mn, f \rangle= \langle m, nf \rangle$, where $\langle nf, a \rangle= \langle n, f a \rangle$ and $\langle fa, b \rangle = \langle f, ab \rangle$ for all $f \in A ^*$ and $a,b \in A $. 

\begin{theorem} \label{m7}
Let $A $ be Banach algebra with a bounded approximate identity and $\alpha$ be a continuous homomorphism on $A$. 
Then every bounded Jordan left $\alpha$-centralizer on $A $ is a left $\alpha$-centralizer. 
\end{theorem}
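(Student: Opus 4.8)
The plan is to transfer the problem to the bidual $A^{**}$, equipped with the first Arens product (written multiplicatively, as in the statement), where a genuine right identity is available, and then to read the conclusion back on $A$. First I would fix a bounded approximate identity $(e_\lambda)$ for $A$, bounded by some $C>0$, and pass to a subnet along which $e_\lambda\to E$ in the weak$^*$ topology of $A^{**}$; it is well known that such an $E$ is a right identity for $(A^{**},\cdot)$. Since $T$ is bounded and $\alpha$ is continuous, their second adjoints $T^{**},\alpha^{**}:A^{**}\to A^{**}$ are weak$^*$--weak$^*$ continuous, and $\alpha^{**}$ is a homomorphism for the first Arens product (the standard iterated--limit computation, using that $\alpha$ is multiplicative on the weak$^*$--dense subalgebra $A$). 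The natural temptation is now to verify that $T^{**}$ is a Jordan left $\alpha^{**}$-centralizer on $A^{**}$ and simply quote Theorem 2.1; I will explain below why I would avoid this and argue directly instead.

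The engine of the proof is the triple--product identity
\[
T(aba)=T(a)\,\alpha(b)\,\alpha(a)\qquad(a,b\in A),
\]
which is exactly relation (2.6) and which follows from the linearized Jordan identity (2.1) by a purely algebraic manipulation valid whenever $2$ is invertible: expanding $T\big(a\circ(a\circ b)\big)-T(a^{2}\circ b)=2\,T(aba)$ with $a\circ b=ab+ba$ and applying (2.1) twice. Crucially this identity needs no identity element, so it is available on all of $A$.

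With this in hand I would substitute $a=e_\lambda$ and $b=x$ for a fixed $x\in A$, obtaining $T(e_\lambda x e_\lambda)=T(e_\lambda)\,\alpha(x)\,\alpha(e_\lambda)$. On the left, $e_\lambda x e_\lambda\to x$ in norm (a two-sided bounded approximate identity gives $\|e_\lambda x e_\lambda-x\|\le C\|x e_\lambda-x\|+\|e_\lambda x-x\|\to0$), so $T(e_\lambda x e_\lambda)\to T(x)$ in norm, hence weak$^*$. On the right, $\alpha(x)\alpha(e_\lambda)=\alpha(x e_\lambda)\to\alpha(x)$ in norm, while the bounded net $T(e_\lambda)$ converges weak$^*$ to $T^{**}(E)$ by weak$^*$ continuity of $T^{**}$. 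The one genuinely technical point is a product--limit lemma: if $m_\lambda\to m$ weak$^*$ with $\sup\|m_\lambda\|<\infty$ and $v_\lambda\to v$ in norm in $A$, then $m_\lambda v_\lambda\to m\,v$ weak$^*$ (split as $\langle m_\lambda, f\!\cdot\! v_\lambda-f\!\cdot\! v\rangle+\langle m_\lambda-m, f\!\cdot\! v\rangle$ and control both by boundedness and $\|f\!\cdot\! w\|\le\|f\|\,\|w\|$). Applying it with $m_\lambda=T(e_\lambda)$ and $v_\lambda=\alpha(x e_\lambda)$ gives the weak$^*$ limit $T^{**}(E)\,\alpha(x)$ on the right. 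Uniqueness of weak$^*$ limits then yields the bidual analogue of Remark 2.2(ii),
\[
T(x)=T^{**}(E)\,\alpha(x)\qquad(x\in A),
\]
an equality in $A^{**}$ whose left-hand side lies in $A$.

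From here the conclusion is immediate, and this is where associativity of the Arens product does the work: for $x,y\in A$,
\[
T(xy)=T^{**}(E)\,\alpha(xy)=T^{**}(E)\,\big(\alpha(x)\alpha(y)\big)=\big(T^{**}(E)\,\alpha(x)\big)\,\alpha(y)=T(x)\,\alpha(y),
\]
using that $\alpha$ is a homomorphism, that on $A\subseteq A^{**}$ the Arens product agrees with the product of $A$, and the displayed formula twice; thus $T$ is a left $\alpha$-centralizer. The main obstacle I anticipate --- and the reason I argue directly rather than quoting Theorem 2.1 on $A^{**}$ --- is that the net $T(e_\lambda)$ converges only weak$^*$ and not in norm, so the argument has no chance to close inside $A$ and one is forced into $A^{**}$; and there the Jordan identity does not transparently pass to $T^{**}$, because the first and second Arens products disagree off the topological centre, so polarizing $T^{**}(m^{2})=T^{**}(m)\,\alpha^{**}(m)$ produces mixed products. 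The triple--product route sidesteps this entirely, since it uses only values of $T$ and $\alpha$ on $A$ together with a single, clean weak$^*$ passage to the limit.
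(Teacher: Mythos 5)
Your proof is correct, and it takes a genuinely different route from the paper's. The paper works in the bidual from the outset: it asserts the adjoint identity $T^*(f)\,a=\alpha^*\,(f\,T(a))$, deduces from it that $T^{**}(mn)=T^{**}(m)\,\alpha^{**}(n)$ for the first Arens product, then applies Theorem 2.1 to $T^{**}$ (using the right identity of $A^{**}$) and restricts back to $A$. You instead stay at the level of $A$: you feed the bounded approximate identity into the triple-product identity $T(aba)=T(a)\alpha(b)\alpha(a)$ (the paper's (2.6), which as you note needs no identity element), make a single weak$^*$ passage to the limit via your product--limit lemma, arrive at the representation $T(x)=T^{**}(E)\,\alpha(x)$, and conclude by multiplicativity of $\alpha$ and associativity of the Arens product. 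Your reluctance to lift the Jordan identity to $A^{**}$ is better founded than you may realize: the paper's opening identity, unwound against $b\in A$, reads $\langle f,T(ab)\rangle=\langle f,T(a)\alpha(b)\rangle$ for all $f\in A^*$, i.e.\ it \emph{is} the left $\alpha$-centralizer property the theorem asserts, so the paper's lifting step begs the question as written; the obstruction to repairing it by weak$^*$ approximation is exactly the first-versus-second Arens product mismatch you describe. Thus your argument is not merely an alternative but a self-contained proof that never needs $T^{**}$ to be a Jordan centralizer on all of $A^{**}$. Two small points of comparison: the bidual route (were the lifting justified) would need only a bounded \emph{right} approximate identity, whereas your $e_\lambda x e_\lambda\to x$ uses two-sidedness --- the standard reading of the hypothesis, so this is harmless; and your route yields as a by-product the explicit formula $T(\cdot)=T^{**}(E)\,\alpha(\cdot)$, which is the abstract form of the Wendel-type Theorem 3.1 and makes Section 3 nearly immediate.
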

\begin{proof} 
Let $T: A \rightarrow A $ be a bounded Jordan left $\alpha$-centralizer. 
It is easy to see that 
$T^*(f) \, a = \alpha^* \, ( f \, T(a))$ and so 
$n \, T^* (f)= T^* ( \alpha ^{**} (n) f)$ 
for all $n \in A ^{**}$, $ f \in A ^*$ and $a \in A $. 
Hence 
$T^{**} (mn) = T^{**}(m) \alpha^{**} (n)$ for all $m,n \in A ^{**}$. 
That is, $T^{**}$ 
is a Jordan left $\alpha^{**}$-centralizer on $A ^{**}$. 
Since $A $ has a bounded approximate identity, $A ^{**}$ has a right identity. 
In view of Theorem 2.1, $T^{**}$ is a left $\alpha^{**}$-centralizer on $A ^{**}$. 
But, 
$T^{**}|_{A}= T$
and $\alpha^{**}|_{A}=\alpha$. 
Therefore, 
$T$ is a left $\alpha$-centralizer on $A $. 
\end{proof} 
Since every $C^*$-algebra has a bounded approximate identity, we have the following result. 
\begin{corollary} Let $\alpha$ be a continuous homomorphism on $C^*$-algebra $A$. Then every bounded Jordan left $\alpha$-centralizer on $A$, is a left $\alpha$-centralizer. 
\end{corollary}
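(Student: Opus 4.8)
The plan is to deduce this corollary directly from Theorem \ref{m7}, since a $C^*$-algebra is in particular a Banach algebra. The only hypothesis of Theorem \ref{m7} that is not already built into the present statement is the existence of a bounded approximate identity, so the whole argument collapses to supplying that one input and then quoting the theorem.

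The key fact I would invoke is the classical result that every $C^*$-algebra $A$ admits a bounded approximate identity. The standard witness is the net indexed by the positive elements $e \in A$ with $\|e\| < 1$, directed by the order on self-adjoint elements; this is an increasing net of positive contractions $(e)$ satisfying $\|ae - a\| \to 0$ and $\|ea - a\| \to 0$ for every $a \in A$, and it is bounded by $1$. I would simply cite this as a foundational theorem in $C^*$-algebra theory rather than reprove it, since nothing in the present setting requires revisiting its construction.

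With a bounded approximate identity available, all the remaining hypotheses of Theorem \ref{m7} are exactly the data given in the corollary: $A$ is a Banach algebra, $\alpha$ is a continuous homomorphism on $A$, and $T$ is a bounded Jordan left $\alpha$-centralizer. Applying Theorem \ref{m7} therefore yields at once that $T$ is a left $\alpha$-centralizer, which is the desired conclusion. There is no genuine obstacle in this proof; the statement is a specialization of Theorem \ref{m7} to the class of Banach algebras for which a bounded approximate identity comes for free, and the sole nontrivial ingredient is the existence theorem for approximate identities in $C^*$-algebras, which one is entitled to quote.
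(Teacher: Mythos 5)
Your proposal is correct and matches the paper's argument exactly: the paper derives this corollary from Theorem \ref{m7} in one line, noting that every $C^*$-algebra has a bounded approximate identity. Your extra detail on the canonical approximate identity (the increasing net of positive contractions of norm less than $1$) is fine but not needed beyond citing the standard existence theorem.
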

We finish this section with the following result.
\begin{proposition} \label{m4} 
Let $A$ be a unital Banach algebra and $\alpha$ be a continuous epimorphism on $A$. Then there is a non-zero compact Jordan left centralizer on $A$ if and only if there is a non-zero compact Jordan left $\alpha$-centralizer on $A$. 
\end{proposition}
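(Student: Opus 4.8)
The plan is to use the unital structure to reduce both kinds of centralizers to left multiplications, and then to compare compactness across the homomorphism $\alpha$.

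First I would record the shape of the relevant maps. Since $A$ is unital with identity $e$ and $\alpha$ is a surjective homomorphism, $\alpha(e)$ acts as a two-sided identity on $\alpha(A)=A$, so $\alpha(e)=e$. By Remark 2.2(ii), every Jordan left $\alpha$-centralizer $T$ on $A$ satisfies $T(x)=T(e)\alpha(x)$; writing $c:=T(e)$ and $L_c(x):=cx$, this says $T=L_c\circ\alpha$, and $T$ is non-zero exactly when $c\neq 0$ (as $\alpha$ is onto, the range of $T$ is $cA$). Taking $\alpha=\mathrm{id}$, every Jordan left centralizer $S$ has the form $S=L_d$ with $d:=S(e)$, again non-zero iff $d\neq 0$. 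Thus the statement reduces to: there is $d\neq 0$ with $L_d$ compact if and only if there is $c\neq 0$ with $L_c\circ\alpha$ compact.

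For the forward direction, given a non-zero compact Jordan left centralizer $S=L_d$ with $d\neq 0$, I would set $T:=S\circ\alpha$, so that $T(x)=d\,\alpha(x)$. A direct check gives $T(x^2)=d\,\alpha(x)\alpha(x)=T(x)\alpha(x)$, so $T$ is a Jordan left $\alpha$-centralizer; it is non-zero since $\alpha$ is onto and $S\neq 0$, and it is compact because it is the composition of the compact operator $S$ with the bounded operator $\alpha$. This direction is routine.

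The substantive direction is the converse, and its key tool is the open mapping theorem. Given a non-zero compact Jordan left $\alpha$-centralizer $T=L_c\circ\alpha$ with $c\neq 0$, I would show that $L_c$ itself is compact; then $S:=L_c$ is a non-zero (in fact full) left, hence Jordan left, centralizer that is compact. Since $\alpha$ is a continuous linear surjection of the Banach space $A$ onto itself, it is open, so there is $\delta>0$ with $\delta B\subseteq\alpha(B)$, where $B$ is the open unit ball. Consequently $L_c(\delta B)\subseteq(L_c\circ\alpha)(B)=T(B)$, and the right-hand set is relatively compact because $T$ is compact; hence $L_c(\delta B)$ is relatively compact and, by scaling, so is $L_c(B)$. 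Therefore $L_c$ is compact, completing the argument. I expect the main obstacle to be precisely this cancellation of $\alpha$: compactness of $L_c\circ\alpha$ does not by itself yield compactness of $L_c$ for a general bounded $\alpha$, and it is the surjectivity of the epimorphism, through the open mapping theorem, that makes it work. Everything else is bookkeeping with the representation $T=L_{T(e)}\circ\alpha$ supplied by the unital case of the earlier results.
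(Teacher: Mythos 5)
Your proof is correct, and its skeleton is the same as the paper's: both directions reduce the relevant maps to the form $T(x)=c\,\alpha(x)$ with $c=T(e)$ (via Theorem 2.1 / Remark 2.2(ii)), the forward direction is handled identically by composing a non-zero compact left centralizer with $\alpha$ and using surjectivity to preserve non-vanishing, and the converse comes down to showing that the left multiplication $L_c:x\mapsto cx$ is compact. The difference is in how that last step is executed, and here your version is actually the more rigorous one. The paper argues with sequences: given a sequence $(a_n)$, it picks arbitrary preimages $x_n$ with $\alpha(x_n)=a_n$ and applies compactness of the given $\alpha$-centralizer $\hat{T}$ to $(x_n)$. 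As written this has a gap: compactness of $\hat{T}$ only controls \emph{bounded} sequences, and arbitrary preimages of a bounded sequence under $\alpha$ need not be bounded. Your argument supplies exactly the missing ingredient: by the open mapping theorem $\delta B\subseteq\alpha(B)$ for some $\delta>0$, so $L_c(\delta B)\subseteq T(B)$ is relatively compact, and scaling gives compactness of $L_c$. (Equivalently, one could repair the paper's sequence argument by using the open mapping theorem to choose preimages with $\|x_n\|\leq C\|a_n\|$ for a uniform constant $C$.) In short, you correctly identified that surjectivity of $\alpha$ enters the converse not merely to produce preimages but, through the open mapping theorem, to produce \emph{bounded} ones; that is the crux of the proposition, and it is the one point the paper's own proof glosses over.
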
 
\begin{proof} 
Let $T: A \rightarrow A$ be a non-zero compact Jordan left centralizer. Then there exists $b \in A$ such that $T(a)=ba$ for all $a \in A$. 
Define $\tilde{T}: A \rightarrow A$ by $\tilde{T}= T \circ \alpha $. 
Clearly, $\tilde{T}$ is a Jordan left $\alpha$-centralizer on $A$. 
Since $T$ is compact, $\tilde{T}$ is compact. 
Now, let $a_0 \in A$ and $T(a_0) \not =0$. Choose $x_0 \in A$ with 
$\alpha (x_0)= a_0$. Then 
\begin{equation*} 
\tilde{T}(x_0)= T \circ \alpha (x_0) = T(a_0) \not = 0. 
\end{equation*}
Hence $\tilde{T}$ is a non-zero compact Jordan left $\alpha$-centralizer on $A$. 

Conversely, let $\hat{T}: A \rightarrow A$ be a non-zero compact Jordan left $\alpha$-centralizer on $A$. 
Hence there exists $b \in A$ such that $\hat{T}(a) = b \alpha(a)$ for all $a \in A$;
in fact, $b= \hat{T}(1_A)$. 
Let $T: A \rightarrow A$ defined by $T(a)=ba$. 
Let $(a_n)$ be a sequence in $A$.
Since $\alpha$ is onto, for every $n\in {\Bbb N}$ there exists $x_n\in A$ such that $\alpha(x_n)= a_n$. 
The compactness of $\hat{T}$ 
shows that the sequence $(x_n)$ has a 
subsequence $(x_{n_k})$ 
such that $(\hat{T}(x_{n_k}))$ is convergent. 
So $(T(a_{n_k}))$ is convergent. 
That is, $T$ is compact. 
\end{proof}
\section{Left $\alpha$-centralizer on group algebra $L^1(G)$} 

Before, we give the first result of this section, let us remark that since $L^1(G)$ has a bounded approximate identity, every Jordan left $\alpha$-centralizer on $L^1(G)$ is a left $\alpha$-centralizer, where $\alpha$ is a continuous homomorphism on $L^1(G)$. The next result is an analogue of Wendel's theorem [15].
\begin{theorem} 
Let $G$ be a locally compact group and $\alpha$ be a continuous homomorphism on $L^1(G)$. 
If $T$ is a bounded Jordan left $\alpha$-centralizer on $L^1(G)$, then there exists $\mu \in M(G)$ such that $T(\varphi) = \mu * \alpha(\varphi)$ for all $ \varphi \in L^1(G)$. 
\end{theorem}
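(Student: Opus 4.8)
The plan is to reduce to the left $\alpha$-centralizer case and then run a Wendel-type weak* compactness argument in $M(G)=C_0(G)^*$. First I would invoke Theorem 2.4: since $L^1(G)$ carries a bounded approximate identity and $\alpha$ is continuous, the bounded Jordan left $\alpha$-centralizer $T$ is in fact a left $\alpha$-centralizer, i.e.
\[ T(\varphi * \psi) = T(\varphi) * \alpha(\psi) \qquad (\varphi, \psi \in L^1(G)). \]
This is exactly the remark preceding the statement, and from this point on the Jordan hypothesis is no longer needed.

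Next I would fix a bounded approximate identity $(e_i)$ for $L^1(G)$. As $T$ is bounded and $(e_i)$ is norm-bounded, the net $(T(e_i))$ is bounded in $L^1(G)$, hence bounded in $M(G)=C_0(G)^*$. By the Banach--Alaoglu theorem I may pass to a subnet $(e_{i_j})$ with $T(e_{i_j}) \to \mu$ in the weak* topology of $M(G)$, for some $\mu \in M(G)$; this $\mu$ is the candidate measure.

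The heart of the argument is a weak* continuity lemma: for fixed $g=\alpha(\varphi)\in L^1(G)$ and $f\in C_0(G)$, the functional $\nu \mapsto \langle \nu * g, f\rangle$ is weak* continuous on $M(G)$. Unwinding the convolution as recalled in the introduction and applying Fubini (legitimate since $|f(xy)|\le\|f\|_\infty$, $g\in L^1$ and $\nu$ is finite) gives
\[ \langle \nu * g, f\rangle = \int_G h(x)\, d\nu(x), \qquad h(x) := \int_G f(xy)\, g(y)\, dy . \]
One then checks $h\in C_0(G)$: writing $h(x)=\langle L_x g, f\rangle$ after the substitution $u=xy$ shows continuity via strong continuity of left translation on $L^1(G)$, while decay at infinity follows by approximating $g$ in $L^1$-norm by compactly supported functions (for which $f(xy)\to 0$ uniformly on the support once $x$ leaves a large compact set). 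Thus $\nu\mapsto\langle\nu*g,f\rangle=\langle\nu,h\rangle$ is weak* continuous. I expect this verification that $h\in C_0(G)$ to be the main technical step, since it is precisely where the structure of $L^1(G)$ (as opposed to an abstract Banach algebra) is used.

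Finally I would combine these ingredients. For fixed $\varphi$ the left $\alpha$-centralizer identity yields $T(e_{i_j} * \varphi) = T(e_{i_j}) * \alpha(\varphi)$. The left-hand side converges to $T(\varphi)$ in $L^1$-norm, because $e_{i_j}*\varphi\to\varphi$ and $T$ is bounded, so in particular it converges weak* in $M(G)$. Testing the right-hand side against an arbitrary $f\in C_0(G)$ and applying the weak* continuity just established, $T(e_{i_j})*\alpha(\varphi)\to \mu*\alpha(\varphi)$ weak*. Hence $\langle T(\varphi), f\rangle=\langle \mu*\alpha(\varphi), f\rangle$ for every $f\in C_0(G)$, and since $L^1(G)$ embeds injectively in $M(G)=C_0(G)^*$ this forces $T(\varphi)=\mu*\alpha(\varphi)$ for all $\varphi\in L^1(G)$ (note $\mu*\alpha(\varphi)\in L^1(G)$ because $L^1(G)$ is an ideal in $M(G)$), which completes the proof.
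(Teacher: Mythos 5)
Your proposal is correct and follows essentially the same route as the paper: reduce to the left $\alpha$-centralizer case via Theorem 2.4, extract a weak$^*$-cluster point $\mu$ of $(T(e_i))$ in $M(G)=C_0(G)^*$, and pass to the limit in $T(e_i * \varphi) = T(e_i)*\alpha(\varphi)$. The only difference is that you explicitly prove the weak$^*$-continuity of $\nu \mapsto \langle \nu * g, f\rangle$ (via $h(x)=\int_G f(xy)g(y)\,dy \in C_0(G)$), a step the paper's proof uses implicitly without justification, so your write-up is in fact the more complete one.
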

\begin{proof} 
Let $(e_{\alpha})$ be a bounded approximate identity for $L^1(G)$. Since $T$ is bounded, $(T(e_{\alpha}))_{\alpha}$ can be regarded as a bounded net in $M(G)= C_0(G)^*$. 
Hence $(T(e_{\alpha}))_{\alpha}$ has a weak$^*$-cluster point, say $\mu \in M(G)$. 
Without loss of generality, we may assume that $T(e_{\alpha}) \rightarrow \mu$ in the weak$^*$ topology of $M(G)$. 
So if $\varphi \in L^1(G)$, then 
\begin{equation*} 
T(e_{\alpha} * \varphi ) = T(e_{\alpha})* \alpha (\varphi) \rightarrow \mu * \alpha (\varphi)
\end{equation*}
in the weak$^*$ topology of $M(G)$. 
On the other hand, $T(e_{\alpha} *\varphi) \rightarrow T(\varphi)$ in the norm topology. Therefore, $T(\varphi) = \mu * \alpha( \varphi)$. 
\end{proof}
In the next result, we characterize compact Jordan left $\alpha$-centralizers on $L^1(G)$, when $\alpha$ is a continuous epimorphism on $L^1(G)$; see Akemann [1].
\begin{corollary}
Let $G$ be a locally compact group and $\alpha$ be a continuous epimorphism on $L^1(G)$. If $T$ is a compact Jordan left $\alpha$-centralizer on $L^1(G)$, then there exists $\psi \in L^1(G)$ such that $T(\varphi)= \psi * \alpha (\varphi)$ for all $\varphi \in L^1(G)$.
\end{corollary}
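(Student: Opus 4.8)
The plan is to build on the preceding theorem (the bounded case in this section): since a compact operator is bounded, that theorem already furnishes a measure $\mu \in M(G)$ with $T(\varphi) = \mu * \alpha(\varphi)$ for all $\varphi \in L^1(G)$. The entire content of the corollary is therefore to upgrade $\mu$ from a general measure in $M(G)$ to an element $\psi$ of the closed ideal $L^1(G)$; once this is done one simply sets $\psi := \mu$ and inherits the representation formula verbatim. So the real work is to show $\mu \in L^1(G)$.

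To carry out the upgrade I would reuse the construction from the proof of the previous theorem. Let $(e_\beta)$ denote the bounded approximate identity used there, so that, after passing to a subnet, $T(e_\beta) \to \mu$ in the weak$^*$ topology of $M(G) = C_0(G)^*$. The new ingredient is that each $T(e_\beta)$ already lies in $L^1(G)$, because $T$ maps $L^1(G)$ into itself, and that $(e_\beta)$ is norm-bounded. Hence compactness of $T$ guarantees that $\{T(e_\beta)\}$ is relatively norm-compact in $L^1(G)$, so the net $(T(e_\beta))$ admits a further subnet converging in the $L^1$-norm to some $\psi \in L^1(G)$.

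I would then reconcile the two limits. The norm-convergent subnet is a subnet of a net converging weak$^*$ to $\mu$, hence it too converges weak$^*$ to $\mu$; on the other hand, since the inclusion $L^1(G) \hookrightarrow M(G)$ is isometric, $L^1$-norm convergence to $\psi$ forces weak$^*$ convergence to $\psi$ in $M(G)$. Because the weak$^*$ topology is Hausdorff, limits are unique, so $\mu = \psi \in L^1(G)$. Substituting into the formula of the previous theorem yields $T(\varphi) = \psi * \alpha(\varphi)$ for all $\varphi \in L^1(G)$, as required. Notice that this route does not actually use the epimorphism hypothesis on $\alpha$; it only uses the closedness of $L^1(G)$ in $M(G)$.

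An alternative — presumably the route signalled by the reference to Akemann [1] — is to factor $T = L_\mu \circ \alpha$, where $L_\mu(\chi) = \mu * \chi$. Since $\alpha$ is a continuous surjection, the open mapping theorem makes $\alpha$ open, so $\alpha$ carries the unit ball of $L^1(G)$ onto a set containing a neighbourhood of the origin; compactness of $T = L_\mu \circ \alpha$ therefore descends to $L_\mu$, exhibiting $L_\mu$ as a compact left multiplier of $L^1(G)$, and Akemann's characterization of such multipliers forces $\mu \in L^1(G)$. The only delicate point in the first approach is the bookkeeping with subnets together with uniqueness of the weak$^*$ limit, and in the second approach the transfer of compactness through the open map; neither is a genuine obstacle once $L^1(G)$ is viewed as a closed subspace of $M(G)$.
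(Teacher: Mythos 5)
Your proposal is correct, and your second (``alternative'') argument is in fact the paper's own proof: the paper applies the preceding theorem to obtain $\mu \in M(G)$ with $T(\varphi) = \mu * \alpha(\varphi)$, asserts that the left centralizer $\phi \mapsto \mu * \phi$ is then compact on $L^1(G)$ (this unexplained step is exactly your open-mapping-theorem transfer, and it is the only place where surjectivity of $\alpha$ is needed), and then cites Akemann [1] and Sakai [14] to conclude $\mu \in L^1(G)$. Your primary route is genuinely different and, in one respect, stronger: instead of descending compactness from $T = L_\mu \circ \alpha$ to $L_\mu$ and invoking the multiplier literature, you apply compactness of $T$ directly to the bounded approximate identity $(e_\beta)$, extract a subnet of $(T(e_\beta))$ converging in $L^1$-norm to some $\psi \in L^1(G)$, and identify $\psi$ with the weak$^*$ cluster point $\mu$ using that norm convergence in $M(G)$ implies weak$^*$ convergence, that $L^1(G) \hookrightarrow M(G)$ is isometric, and that the weak$^*$ topology is Hausdorff. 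This argument is self-contained --- it needs neither the references [1,14] nor the open mapping theorem --- and, as you observe, it never uses surjectivity of $\alpha$, so it proves the corollary for an arbitrary continuous homomorphism. What the paper's route buys in exchange is brevity and an explicit link to the Akemann--Sakai characterization of compact multipliers, which the paper immediately reuses in Proposition 3.3 to tie the existence of nonzero compact Jordan left $\alpha$-centralizers to compactness of $G$; your norm-limit argument, while cleaner here, does not by itself provide that connection.
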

\begin{proof}
Let $\alpha$ be a continuous epimorphism on $L^1(G)$ and $T$ be a compact Jordan left $\alpha$-centralizer on $L^1(G)$. 
Then there exists $\mu\in M(G)$ such that $T(\varphi) = \mu * \alpha(\varphi)$ for all $ \varphi \in L^1(G)$. So the left centralizer $\phi\mapsto \mu\ast\phi$ is compact on $L^1(G)$. Hence $\mu \in L^1(G)$; see [1,14]. 
\end{proof}
\begin{proposition} 
Let $G$ be a locally compact group and $\alpha$ be a continuous epimorphism on $L^1(G)$. Then the following assertions are equivalent. 

\emph{(a)} There is a non-zero compact Jordan left $\alpha$-centralizer on $L^1(G)$. 

\emph{(b)} Every Jordan left $\alpha$-centralizer on $L^1(G)$ is compact. 

\emph{(c)} $G$ is compact. 
\end{proposition}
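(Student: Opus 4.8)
The plan is to prove the equivalence as a cycle of implications, say $(b)\Rightarrow(a)\Rightarrow(c)\Rightarrow(b)$, drawing on the two representation results already established in this section (Theorem 3.1 and Corollary 3.2) together with the Sakai--Akemann characterization of the compactness of $G$ via the existence of a nonzero compact left centralizer on $L^1(G)$ [1, 14]. Two standing observations feed every step: by the remark opening this section, every Jordan left $\alpha$-centralizer on $L^1(G)$ is automatically a left $\alpha$-centralizer; and $\alpha$ itself, being an epimorphism onto the nonzero algebra $L^1(G)$, is a nonzero Jordan left $\alpha$-centralizer (indeed $\alpha(x^2)=\alpha(x)\alpha(x)$).

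The implication $(b)\Rightarrow(a)$ is then immediate, since hypothesis $(b)$ forces the nonzero Jordan left $\alpha$-centralizer $\alpha$ to be compact, producing the required example. For $(a)\Rightarrow(c)$ I would take a nonzero compact Jordan left $\alpha$-centralizer $T$ and apply Corollary 3.2 to write $T(\varphi)=\psi*\alpha(\varphi)$ with $\psi\in L^1(G)$, i.e.\ $T=L_\psi\circ\alpha$ where $L_\psi\colon\varphi\mapsto\psi*\varphi$. The aim is to transfer compactness from $T$ to the ordinary left centralizer $L_\psi$. Because $\alpha$ is a continuous surjection between Banach spaces, the open mapping theorem gives $\delta>0$ with $\alpha(B)\supseteq\delta B$, where $B$ is the unit ball of $L^1(G)$; hence $L_\psi(\delta B)\subseteq T(B)$, so $L_\psi(\delta B)$ is relatively compact and $L_\psi$ is a compact left centralizer. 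It is nonzero, since $T\neq 0$ forces $\psi\neq 0$, and then $\psi*e_\lambda\to\psi\neq 0$ along a bounded approximate identity $(e_\lambda)$ gives $L_\psi\neq 0$. The existence of a nonzero compact left centralizer then yields the compactness of $G$ by [1, 14].

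The implication $(c)\Rightarrow(b)$ is where I expect the genuine difficulty to sit. Assuming $G$ compact, an arbitrary Jordan left $\alpha$-centralizer $T$ is, by Theorem 3.1, of the form $T(\varphi)=\mu*\alpha(\varphi)=L_\mu\circ\alpha(\varphi)$ for some $\mu\in M(G)$, and one must deduce that $T$ is compact. The natural tool is Akemann's theorem [1] that for compact $G$ convolution by any $L^1$-function is a compact operator, so the crux is to establish that the element effecting the convolution actually lies in $L^1(G)$ rather than merely in $M(G)$. This is the main obstacle: the representing measure $\mu$ in Theorem 3.1 is only a weak$^*$-cluster point of $\bigl(T(e_\lambda)\bigr)$ and need not, a priori, be absolutely continuous, so proving $\mu\in L^1(G)$ (or otherwise forcing compactness of $L_\mu\circ\alpha$) is the delicate point on which the whole equivalence turns, and it is precisely here that the surjectivity of $\alpha$ and the compactness of $G$ must be combined with care. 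Once compactness of every such $T$ is secured, assertion $(b)$ follows and the cycle closes.
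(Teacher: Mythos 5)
Your implications (b)$\Rightarrow$(a) and (a)$\Rightarrow$(c) are correct, and are in fact more careful than the paper's own treatment: the observation that $\alpha$ itself is a nonzero Jordan left $\alpha$-centralizer, and the open-mapping-theorem argument transferring compactness from $T=L_\psi\circ\alpha$ to $L_\psi$, supply exactly the details that the paper leaves implicit (its Proposition 2.6, which the paper's proof cites, chooses preimages $x_n$ of an arbitrary sequence without ensuring they are bounded -- your use of the open mapping theorem repairs precisely that). The genuine gap is that you never prove (c)$\Rightarrow$(b): you reduce it via Theorem 3.1 to showing that the representing measure $\mu$ lies in $L^1(G)$ (or that $L_\mu\circ\alpha$ is compact regardless), declare this the delicate point, and stop. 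A proposal that ends by naming the remaining difficulty is not a proof of the equivalence.

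Moreover, that step does not merely resist your methods -- it is false, and your own two observations show why. Take $G$ compact and infinite and let $T=\alpha$ (the case $\mu=\delta_e$ in Theorem 3.1). As you note, $\alpha$ is a nonzero Jordan left $\alpha$-centralizer; and by your open-mapping argument $\alpha(B)\supseteq\delta B$, so if $\alpha$ were compact the unit ball of $L^1(G)$ would be relatively compact, forcing $L^1(G)$ to be finite dimensional, i.e.\ $G$ finite. Hence for infinite compact $G$ assertion (c) holds while (b) fails; given that $\alpha$ is surjective, (b) is actually equivalent to $G$ being \emph{finite}. The paper's own proof does not close this gap either: it rests on the assertion that $G$ is compact ``or equivalently, every left centralizer on $L^1(G)$ is compact,'' which overstates the Sakai--Akemann results -- convolution by $\delta_e$ (the identity operator) is a non-compact left centralizer whenever $G$ is infinite; what [1, 14] actually give is that the compact left centralizers are exactly the convolutions by elements of $L^1(G)$, nonzero ones existing if and only if $G$ is compact. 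So your instinct about where the difficulty sits was exactly right: (a)$\Leftrightarrow$(c) is provable, and your arguments prove it, but the equivalence with (b) as stated cannot be established by you, by the paper, or by anyone.
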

\begin{proof} 
It is proved that $G$ is compact if and only if there is a non-zero compact left centralizer on $L^1(G)$; or equivalently, every left centralizer on $L^1(G)$ is compacts, see [1,14]. These facts together with Proposition 2.6 prove the result. 
\end{proof}

\end{document}